\newtheorem{theorem}{Theorem}[section]
\newtheorem{lemma}[theorem]{Lemma}
\newtheorem{corollary}[theorem]{Corollary}
\begin{document}

\title[On a Generalization for Tribonacci Quaternions]{On a Generalization for Tribonacci Quaternions}

\author[G. Cerda-Morales]{Gamaliel Cerda-Morales}
\address{Instituto de Matem\'maticas, Pontificia Universidad Cat\'olica de Valpara\'iso, Blanco Viel 596, Valpara\'iso, Chile.}
\email{gamaliel.cerda.m@mail.pucv.cl}


\begin{abstract}
Let $V_{n}$ denote the third order linear recursive sequence defined by the initial values $V_{0}$, $V_{1}$ and $V_{2}$ and the recursion $V_{n}=rV_{n-1}+sV_{n-2}+tV_{n-3}$ if $n\geq 3$, where $r$, $s$, and $t$ are real constants. The $\{V_{n}\}_{n\geq0}$ are generalized Tribonacci numbers and reduce to the usual Tribonacci numbers when $r=s=t=1$ and to the $3$-bonacci numbers when $r=s=1$ and $t=0$. In this study, we introduced a quaternion sequence which has not been introduced before. We show that the new quaternion sequence that we introduced includes the previously introduced Tribonacci, Padovan, Narayana and Third order Jacobsthal quaternion sequences. We obtained the Binet formula, summation formula and the norm value for this new quaternion sequence.

\vspace{2mm}

\noindent\textsc{2010 Mathematics Subject Classification.} 11R52, 11B37, 11B39, 11B83.

\vspace{2mm}

\noindent\textsc{Keywords and phrases.} Quaternion, Generalized Tribonacci sequence, Narayana sequence, Third order Jacobsthal sequence.

\end{abstract}



\maketitle


\section{Generalized Tribonacci sequence}
We consider the generalized Tribonacci Sequence, $\{V_{n}(V_{0},V_{1},V_{2};r,s,t)\}_{n\geq0}$, or briefly $\{V_{n}\}_{n\geq0}$, defined as follows:
\begin{equation}\label{eq:1}
V_{n}=rV_{n-1}+sV_{n-2}+tV_{n-3},\ n\geq3,
\end{equation}
where $V_{0}=a$, $V_{1}=b$, $V_{2}=c$ are arbitrary integers and $r$, $s$, $t$, are real numbers.

This sequence has been studied by Shannon and Horadam \cite{Sha}, Yalavigi \cite{Ya} and Pethe \cite{Pe}. If we set $r=s=t=1$ and $V_{0}=0=V_{1}$, $V_{2}=1$, then $\{V_{n}\}$ is the well-known Tribonacci sequence which has been considered extensively (see, for example, \cite{Fe}). 

As the elements of this Tribonacci-type number sequence provide third order iterative relation, its characteristic equation is $x^{3}-rx^{2}-sx-t=0$, whose roots are $\alpha=\alpha(r,s,t)=\frac{r}{3}+A+B$, $\omega_{1}=\frac{r}{3}+\epsilon A+\epsilon^{2} B$ and $\omega_{2}=\frac{r}{3}+\epsilon^{2}A+\epsilon B$, where $$A=\left(\frac{r^{3}}{27}+\frac{rs}{6}+\frac{t}{2}+\sqrt{\Delta}\right)^{\frac{1}{3}},\ B=\left(\frac{r^{3}}{27}+\frac{rs}{6}+\frac{t}{2}-\sqrt{\Delta}\right)^{\frac{1}{3}},$$ with $\Delta=\Delta(r,s,t)=\frac{r^{3}t}{27}-\frac{r^{2}s^{2}}{108}+\frac{rst}{6}-\frac{s^{3}}{27}+\frac{t^{2}}{4}$ and $\epsilon=-\frac{1}{2}+\frac{i\sqrt{3}}{2}$. 

In this paper, $\Delta(r,s,t)>0$, then the equation (\ref{eq:1}) has one real and two nonreal solutions, the latter being conjugate complex. Thus, the Binet formula for the generalized Tribonacci numbers can be expressed as:
\begin{equation}\label{eq:2}
V_{n}=\frac{P\alpha^{n}}{(\alpha-\omega_{1})(\alpha-\omega_{2})}-\frac{Q\omega_{1}^{n}}{(\alpha-\omega_{1})(\omega_{1}-\omega_{2})}+\frac{R\omega_{2}^{n}}{(\alpha-\omega_{2})(\omega_{1}-\omega_{2})},
\end{equation}
where $P=c-(\omega_{1}+\omega_{2})b+\omega_{1}\omega_{2}a$, $Q=c-(\alpha+\omega_{2})b+\alpha\omega_{2}a$ and $R=c-(\alpha+\omega_{1})b+\alpha\omega_{1}a$.

In fact, the generalized Tribonacci sequence is the generalization of the well-known sequences like Tribonacci, Padovan, Narayana and third order Jacobsthal. For example, $\{V_{n}(0,0,1;1,1,1)\}_{n\geq0}$, $\{V_{n}(0,1,0;0,1,1)\}_{n\geq0}$, are Tribonacci and Padovan sequences, respectively. The Binet formula for the generalized Tribonacci sequence is expressed as follows:
\begin{lemma}
The Binet formula for the generalized Tribonacci sequence is:
\begin{equation}\label{eq:3}
V_{n}=cU_{n}+(bs+at)U_{n-1}+btU_{n-2},
\end{equation}
where $a$, $b$ and $c$ are initial values and
\begin{equation}\label{eq:4}
U_{n}=\frac{\alpha^{n}}{(\alpha-\omega_{1})(\alpha-\omega_{2})}-\frac{\omega_{1}^{n}}{(\alpha-\omega_{1})(\omega_{1}-\omega_{2})}+\frac{\omega_{2}^{n}}{(\alpha-\omega_{2})(\omega_{1}-\omega_{2})}.
\end{equation}
\end{lemma}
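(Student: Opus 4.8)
The plan is to prove the identity $V_n = cU_n + (bs+at)U_{n-1} + bt U_{n-2}$ by reducing it to the explicit Binet formula \eqref{eq:2} already established. Since $V_n$ and the sequence $U_n$ are both expressed as linear combinations of $\alpha^n$, $\omega_1^n$, $\omega_2^n$, the most transparent route is to match coefficients of these three linearly independent terms on each side. Concretely, I would substitute \eqref{eq:4} into the right-hand side and collect the coefficient of $\alpha^n$, which is $\frac{1}{(\alpha-\omega_1)(\alpha-\omega_2)}\bigl(c + (bs+at)\alpha^{-1} + bt\,\alpha^{-2}\bigr)$, and similarly for $\omega_1^n$ and $\omega_2^n$. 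Comparing with \eqref{eq:2}, it then suffices to verify the three scalar identities
\begin{equation}\label{eq:plan1}
c + (bs+at)\alpha^{-1} + bt\,\alpha^{-2} = P,\quad\text{and analogously } Q,\ R,
\end{equation}
where $P=c-(\omega_1+\omega_2)b+\omega_1\omega_2 a$.

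The key step is to exploit the relations between the roots and the coefficients $r,s,t$ coming from the characteristic equation $x^3 - rx^2 - sx - t = 0$. By Vieta's formulas, $\alpha+\omega_1+\omega_2 = r$, $\alpha\omega_1 + \alpha\omega_2 + \omega_1\omega_2 = -s$, and $\alpha\omega_1\omega_2 = t$. First I would use $\alpha\omega_1\omega_2 = t$ to write $\alpha^{-1} = \frac{\omega_1\omega_2}{t}$ and $\alpha^{-2} = \frac{\omega_1^2\omega_2^2}{t^2}$, so that the left-hand side of \eqref{eq:plan1} becomes $c + (bs+at)\frac{\omega_1\omega_2}{t} + b\,\frac{\omega_1^2\omega_2^2}{t}$. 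The term $at\cdot\frac{\omega_1\omega_2}{t} = a\,\omega_1\omega_2$ already matches the $a$-coefficient in $P$, so the remaining task is to show that the $b$-terms collapse correctly, i.e. that $s\,\frac{\omega_1\omega_2}{t} + \frac{\omega_1^2\omega_2^2}{t} = -(\omega_1+\omega_2)$. Multiplying through by $t = \alpha\omega_1\omega_2$ reduces this to $s + \omega_1\omega_2 = -\alpha(\omega_1+\omega_2)$, which is exactly the rearrangement $\alpha(\omega_1+\omega_2)+\omega_1\omega_2 = -s$ of the second Vieta relation. The identities for $Q$ and $R$ follow by the symmetry $\alpha \leftrightarrow \omega_1$ and $\alpha \leftrightarrow \omega_2$ in the argument.

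An alternative, perhaps cleaner, route is induction on $n$. Here one first checks the three base cases $n=0,1,2$ directly against $V_0=a$, $V_1=b$, $V_2=c$ (using small-index values of $U_n$ such as $U_0=0$, $U_1=1$, $U_2=r$, which are read off from \eqref{eq:4} via the symmetric-function identities above), and then observes that the right-hand side of \eqref{eq:3}, being a fixed linear combination of the shifted sequences $U_n, U_{n-1}, U_{n-2}$, automatically satisfies the same third-order recursion \eqref{eq:1} that $V_n$ does, since $U_n$ itself does. The anticipated main obstacle in either approach is purely bookkeeping: correctly handling the negative-index terms $U_{-1}, U_{-2}$ in the base-case verification, or equivalently keeping the Vieta substitutions consistent when clearing the denominators $t$ and $t^2$. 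I would favor the coefficient-matching argument, as it isolates the single nontrivial relation $\alpha(\omega_1+\omega_2)+\omega_1\omega_2 = -s$ and makes the role of the characteristic equation fully explicit.
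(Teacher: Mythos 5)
Your preferred route---matching the coefficients of $\alpha^{n}$, $\omega_{1}^{n}$, $\omega_{2}^{n}$ against \eqref{eq:2} and reducing everything to the Vieta relations $\alpha+\omega_{1}+\omega_{2}=r$, $\alpha\omega_{1}+\alpha\omega_{2}+\omega_{1}\omega_{2}=-s$, $\alpha\omega_{1}\omega_{2}=t$---is correct and is genuinely more explicit than what the paper does: the paper's proof is a single sentence asserting that the identity ``can be confirmed using the recurrence relation,'' together with the observation $\{U_{n}\}_{n\geq0}=\{V_{n}(0,0,1;r,s,t)\}_{n\geq0}$, i.e.\ essentially your second (inductive) route with no details supplied. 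Your key computation $c+(bs+at)\alpha^{-1}+bt\,\alpha^{-2}=P$ checks out and has the merit of isolating exactly where the characteristic equation enters. Two caveats. First, inverting $\alpha$ (equivalently, the appearance of $U_{-1}$ and $U_{-2}$ in the base cases) silently assumes $t\neq 0$, so that $\alpha\omega_{1}\omega_{2}=t\neq0$; the paper makes the same implicit assumption later when it sets $U_{-1}=1/t$, but it should be stated. Second, and more substantively, the initial values you quote for the alternative inductive route are wrong: from \eqref{eq:4} one computes $U_{0}=0$, $U_{1}=0$, $U_{2}=1$ (and then $U_{3}=r$), not $U_{0}=0$, $U_{1}=1$, $U_{2}=r$. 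With your stated values the base case fails ($V_{2}$ would come out as $rc+\cdots$ rather than $c$); with the correct ones the check goes through, e.g.\ $V_{0}=c\cdot 0+(bs+at)\tfrac{1}{t}+bt\left(-\tfrac{s}{t^{2}}\right)=a$. Since you explicitly favor the coefficient-matching argument, this slip does not invalidate your proof, but it would derail the inductive variant if executed as written.
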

\begin{proof}
The validity of this formula can be confirmed using the recurrence relation. Furthermore, $\{U_{n}\}_{n\geq0}=\{V_{n}(0,0,1;r,s,t)\}_{n\geq0}$.
\end{proof}

On the other hand, Horadam \cite{Ho} introduced the $n$-th Fibonacci and the $n$-th Lucas quaternion as follow:
\begin{equation}\label{equa:1}
Q_{n}=F_{n}+iF_{n+1}+jF_{n+2}+kF_{n+3},
\end{equation}
\begin{equation}\label{equa:2}
K_{n}=L_{n}+iL_{n+1}+jL_{n+2}+kL_{n+3},
\end{equation}
respectively. Here $F_{n}$ and $L_{n}$ are the $n$-th Fibonacci and Lucas numbers, respectively. Furthermore, the basis $i,j,k$ satisfies the following rules:
\begin{equation}\label{equa:3}
i^{2}=j^{2}=k^{2}=-1,\ ijk=-1.
\end{equation}

Note that the rules (\ref{equa:3}) imply $ij=-ji=k$, $jk=-kj=i$ and $ki=-ik=j$. In general, a quaternion is a hyper-complex number and is defined by $q=q_{0}+iq_{1}+jq_{2}+kq_{3}$, where $i,j,k$ are as in (\ref{equa:3}) and $q_{0},q_{1},q_{2}$ and $q_{3}$ are real numbers. Note that we can write $q=q_{0}+u$ where $u= iq_{1}+jq_{2}+kq_{3}$. The conjugate of the quaternion $q$ is denoted by $q^{*}=q_{0}-u$. The norm of a quaternion $q$ is defined by $Nr^{2}(q)=q_{0}^{2}+q_{1}^{2}+q_{2}^{2}+q_{3}^{2}$. 

In this paper we introduce and study the generalized Tribonacci quaternions. We show that the new quaternion sequence that we introduced includes the previously introduced Tribonacci, Padovan, Narayana and Third order Jacobsthal quaternion sequences. We obtained the Binet formula and calculated a quadratic identity, summation formula and the norm value for this new quaternion sequence. Furthermore, we describe their properties also using a matrix representation.

\section{Generalized Tribonacci quaternions}
In this section, we define a new relation which generalizes all the third order recurrence relations which have been studied so far. We present some quadratic identities and some results corresponding to the results of previously studied sequences. 

First, for $n\geq 2$ and $p$, $q$ integers, using $W_{n}=pW_{n-1}+qW_{n-2}$, $W_{0}=a$ and $W_{1}=b$, Halici and Karata\c{s} in \cite{Ha2} showed the presence of the following iterative relation $W_{n}=bT_{n}+aqT_{n-1}$, where $T_{n}=pT_{n-1}+qT_{n-2}$, $T_{0}=0$ and $T_{1}=1$. The above formula is a type of generalization for Fibonacci numbers. Furthermore, the authors made a new generalization for the Fibonacci quaternions with the help of this relation, which is expressed below: 
\begin{equation}\label{eq:5}
Q_{w,n}=W_{n}+W_{n+1}i+W_{n+2}j+W_{n+3}k,\ n\geq0,
\end{equation}
where $W_{n}$ is the $n$-th Horadam number.

Now, let us propose a new type of quaternion similar to generalized Tribonacci:
\begin{equation}\label{eq:6}
Q_{v,n}=V_{n}+V_{n+1}i+V_{n+2}j+V_{n+3}k,\ n\geq0.
\end{equation}
Here, $V_{n}$ is the $n$-th generalized Tribonacci number defined by Eq. (\ref{eq:1}). This new quaternion can be called generalized Tribonacci quaternion due to its coefficients. The following iterative relation is obtained after some basic calculations:
\begin{equation}\label{eq:7}
Q_{v,n}=rQ_{v,n-1}+sQ_{v,n-2}+tQ_{v,n-3},\ n\geq3.
\end{equation}

The Eq. (\ref{eq:6}) generalizes different quaternionic sequences, which are known as quaternions similar to Tribonacci in the literature. Two of the most well-known of these are Narayana and Third order Jacobsthal quaternions. When we calculate the initial values of the sequence in Eq. (\ref{eq:7}), we find:
\begin{equation}\label{eq:8}
\left\{ 
\begin{array}{c}
Q_{v,0}=a+bi+cj+(rc+sb+ta)k, \\ 
Q_{v,1}=b+ci+(rc+sb+ta)j+((r^{2}+s)c+(t+rs)b+rta)k, \\
Q_{v,2}=c+(rc+sb+ta)i+((r^{2}+s)c+(t+rs)b+rta)j+V_{5}k.
\end{array}
\right.
\end{equation}
where $V_{5}=(r^{3}+2rs+t)c+(r^{2}s+s^{2}+rt)b+(r^{2}t+st)a$.

It must be noted that when $V_{n}(0,1,1;1,1,2)$ are taken instead of $V_{n}(r,s,t)$ in Eqs. (\ref{eq:8}), $Q_{v,0}=i+j+2k$, $Q_{v,0}=1+i+2j+5k$ and $Q_{v,0}=1+2i+5j+9k$ is obtained. Here $Q_{v,n}=QJ_{n}^{(3)}$ are Third order Jacobsthal quaternions found by Cerda-Morales in \cite{Ce}. For this case, the roots of the characteristic equation of the sequence are $x=2$ and $x_{1,2}=\frac{-1\pm i\sqrt{3}}{2}$. Note that the latter two are the complex conjugate cube roots of unity. In \cite{Ce}, with the help of the roots above, Binet formula for Third order Jacobsthal quaternions was given as follows:
\begin{equation}\label{eq:9}
JQ_{n}^{(3)}=\frac{1}{7}\left[2^{n+1}\underline{2}-\left(1+\frac{2i\sqrt{3}}{3}\right)x_{1}^{n}\underline{x_{1}}-\left(1-\frac{2i\sqrt{3}}{3}\right)x_{2}^{n}\underline{x_{2}}\right],
\end{equation}
where $x_{1,2}$ are the solutions of the equation $t^{2}+t+1=0$, and $\underline{2}=1+2i+4j+8k$, $\underline{x_{1}}=1+x_{1}i+x_{1}^{2}j+k$ and $\underline{x_{2}}=1+x_{2}i+x_{2}^{2}j+k$.

In the following theorem, we derive the Binet formula for generalized Tribonacci quaternions.
\begin{theorem}
Binet formula for generalized Tribonacci quaternions is
\begin{equation}\label{eq:10}
Q_{v,n}=\frac{P\underline{\alpha}\alpha^{n}}{(\alpha-\omega_{1})(\alpha-\omega_{2})}-\frac{Q\underline{\omega_{1}}\omega_{1}^{n}}{(\alpha-\omega_{1})(\omega_{1}-\omega_{2})}+\frac{R\underline{\omega_{2}}\omega_{2}^{n}}{(\alpha-\omega_{2})(\omega_{1}-\omega_{2})},
\end{equation}
where $P$, $Q$ and $R$ as in Eq. (\ref{eq:2}), and $\underline{\alpha}=1+\alpha i+\alpha^{2} i+\alpha^{3} k$, $\underline{\omega_{1}}=1+\omega_{1} i+\omega_{1}^{2} i+\omega_{1}^{3} k$ and $\underline{\omega_{2}}=1+\omega_{2} i+\omega_{2}^{2} i+\omega_{2}^{3} k$.
\end{theorem}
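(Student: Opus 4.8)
The plan is to insert the scalar Binet formula (\ref{eq:2}) for each of the four generalized Tribonacci numbers appearing in the definition (\ref{eq:6}) of $Q_{v,n}$, and then to regroup the resulting terms according to the three characteristic roots $\alpha$, $\omega_{1}$, $\omega_{2}$. Concretely, writing $V_{n+m}$ for $m=0,1,2,3$ by means of (\ref{eq:2}) and substituting into $Q_{v,n}=V_{n}+V_{n+1}i+V_{n+2}j+V_{n+3}k$ produces a sum that separates into three blocks, one attached to each root, each carrying the corresponding coefficient $P$, $-Q$, or $R$ together with the same denominator as in (\ref{eq:2}).

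First I would treat the $\alpha$-block. Collecting the four terms carrying powers of $\alpha$ gives
\begin{equation*}
\frac{P}{(\alpha-\omega_{1})(\alpha-\omega_{2})}\left(\alpha^{n}+\alpha^{n+1}i+\alpha^{n+2}j+\alpha^{n+3}k\right).
\end{equation*}
Factoring $\alpha^{n}$ out of the parenthesis leaves the bracketed quantity $1+\alpha i+\alpha^{2}j+\alpha^{3}k$, which is exactly the quaternion $\underline{\alpha}$, so this block reproduces the first summand of (\ref{eq:10}). The identical computation applied to the $\omega_{1}$- and $\omega_{2}$-blocks yields $\underline{\omega_{1}}\omega_{1}^{n}$ and $\underline{\omega_{2}}\omega_{2}^{n}$ with their respective coefficients and signs, which together complete the derivation.

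The one point that genuinely requires care—a justification rather than a real obstacle—is that the factorization above is legitimate only because the scalars $\alpha$, $\omega_{1}$, $\omega_{2}$, $P$, $Q$, $R$ (complex numbers, since $\omega_{1}$ and $\omega_{2}$ are nonreal) commute with the quaternion units $i$, $j$, $k$. One therefore works in the complexified quaternion algebra, keeping the complex unit strictly distinct from the quaternion units, so that each $\alpha^{n}$ may be pulled freely through $i$, $j$, $k$. Granting this, the entire argument is a term-by-term bookkeeping that parallels the proof of the scalar Binet formula (\ref{eq:2}); no new identity among the roots is needed, and the real-valuedness of $Q_{v,n}$ is automatic because the $\omega_{1}$- and $\omega_{2}$-contributions are complex conjugates of one another.
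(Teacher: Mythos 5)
Your proposal is correct and follows essentially the same route as the paper: substitute the scalar Binet formula (\ref{eq:2}) into each component of $Q_{v,n}$ and collect the terms attached to each root, the ``basic calculations'' the paper leaves implicit being exactly your factoring of $\alpha^{n}$ out of $\alpha^{n}+\alpha^{n+1}i+\alpha^{n+2}j+\alpha^{n+3}k$. (Incidentally, your $\underline{\alpha}=1+\alpha i+\alpha^{2}j+\alpha^{3}k$ is the intended definition; the ``$\alpha^{2}i$'' in the theorem statement is a typo.)
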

\begin{proof}
Using the definition of generalized Tribonacci quaternions $\{Q_{v,n}\}_{n\geq0}$, the following equation can be written:
\begin{align*}
Q_{v,n}&=\frac{P\alpha^{n}}{(\alpha-\omega_{1})(\alpha-\omega_{2})}-\frac{Q\omega_{1}^{n}}{(\alpha-\omega_{1})(\omega_{1}-\omega_{2})}+\frac{R\omega_{2}^{n}}{(\alpha-\omega_{2})(\omega_{1}-\omega_{2})}\\
&\ \ + \left(\frac{P\alpha^{n+1}}{(\alpha-\omega_{1})(\alpha-\omega_{2})}-\frac{Q\omega_{1}^{n+1}}{(\alpha-\omega_{1})(\omega_{1}-\omega_{2})}+\frac{R\omega_{2}^{n+1}}{(\alpha-\omega_{2})(\omega_{1}-\omega_{2})}\right)i\\
&\ \ + \left(\frac{P\alpha^{n+2}}{(\alpha-\omega_{1})(\alpha-\omega_{2})}-\frac{Q\omega_{1}^{n+2}}{(\alpha-\omega_{1})(\omega_{1}-\omega_{2})}+\frac{R\omega_{2}^{n+2}}{(\alpha-\omega_{2})(\omega_{1}-\omega_{2})}\right)j\\
&\ \ + \left(\frac{P\alpha^{n+3}}{(\alpha-\omega_{1})(\alpha-\omega_{2})}-\frac{Q\omega_{1}^{n+3}}{(\alpha-\omega_{1})(\omega_{1}-\omega_{2})}+\frac{R\omega_{2}^{n+3}}{(\alpha-\omega_{2})(\omega_{1}-\omega_{2})}\right)k.
\end{align*}
Then, some basic calculations are made and Eq. (\ref{eq:10}) is found.
\end{proof}

In the following table, we give the formula $\alpha(r,s,t)$ for some special third order sequences.
\begin{table}[ht] 
\caption{Formulas for $\alpha(r,s,t)$ according to different values of $r,s,t$.} 
\centering      
\begin{tabular}{c c c}  
\hline                      
Sequences & $(V_{0},V_{1},V_{2};r,s,t)$ & $\alpha(r,s,t)$\\ [0.5ex] 
\hline    
Narayana & $\left(0,1,1;1,0,1\right)$  & $\frac{1}{3}+\sqrt[3]{\frac{29}{54}+\frac{\sqrt{93}}{18}}+\sqrt[3]{\frac{29}{54}-\frac{\sqrt{93}}{18}}$\\                
Tribonacci-Lucas & $\left(0,0,1;1,1,1\right)$  & $\frac{1}{3}+\sqrt[3]{\frac{19}{27}+\frac{\sqrt{33}}{9}}+\sqrt[3]{\frac{19}{27}-\frac{\sqrt{33}}{9}}$\\  
Padovan-Perrin & $\left(0,1,0;0,1,1\right)$  & $\sqrt[3]{\frac{1}{2}+\frac{\sqrt{69}}{18}}+\sqrt[3]{\frac{1}{2}-\frac{\sqrt{69}}{18}}$\\  
Third order Jacobsthal  &  $\left(0,1,1;1,1,2\right)$ & $\frac{4}{3}$ \\ [1ex]       
\hline     
\end{tabular} 
\label{table:1}  
\end{table}

The values given in the table are the same with the values given in some previous studies (see, for example, \cite{Di,Fe,Pe,Sha}). In other words, the formula we give is a generalization. For example, the Fibonacci-Narayana quaternions given in the first row of the table were also defined by Flaut and Shpakivskyi \cite{Fla1} by giving suitable initial values and by listing its properties. Also, Third order Jacobsthal quaternions in the fourth row of the table were defined by Cerda-Morales \cite{Ce}, who examined its properties. As a result, here we generalized different Tribonacci-like quaternion sequences in one formula.

In the following theorem we present the generating function for generalized Tribonacci quaternions.
\begin{theorem}
The generating function for the generalized Tribonacci quaternion $Q_{v,n}$ is
\begin{equation}\label{eq:11}
g(x)=\frac{Q_{v,0}+(Q_{v,1}-rQ_{v,0})x+(Q_{v,2}-rQ_{v,1}-sQ_{v,0})x^{2}}{1-rx-sx^{2}-tx^{3}}
\end{equation}
\end{theorem}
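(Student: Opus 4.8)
The plan is to treat $g(x)=\sum_{n\geq0}Q_{v,n}x^{n}$ as a formal power series with quaternionic coefficients and a scalar (commuting) indeterminate $x$, and to exploit the quaternionic recurrence (\ref{eq:7}) to collapse the series into the advertised rational form. The key device is to multiply $g(x)$ by the denominator $1-rx-sx^{2}-tx^{3}$ and show that the recurrence forces all but finitely many coefficients to cancel.

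First I would write out the three shifted series and reindex each so its generic term is a coefficient of $x^{n}$:
\begin{align*}
rx\,g(x)&=\sum_{n\geq1}rQ_{v,n-1}x^{n},\\
sx^{2}\,g(x)&=\sum_{n\geq2}sQ_{v,n-2}x^{n},\\
tx^{3}\,g(x)&=\sum_{n\geq3}tQ_{v,n-3}x^{n}.
\end{align*}
Next I would form the product $(1-rx-sx^{2}-tx^{3})g(x)$ and collect by powers of $x$. For every $n\geq3$ the coefficient of $x^{n}$ is exactly $Q_{v,n}-rQ_{v,n-1}-sQ_{v,n-2}-tQ_{v,n-3}$, which vanishes by (\ref{eq:7}). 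Hence only the boundary terms $n=0,1,2$ survive, yielding the numerator $Q_{v,0}+(Q_{v,1}-rQ_{v,0})x+(Q_{v,2}-rQ_{v,1}-sQ_{v,0})x^{2}$. Dividing through by $1-rx-sx^{2}-tx^{3}$ then gives (\ref{eq:11}).

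There is no deep obstacle here: the argument is a routine generating-function manipulation. The only points requiring care are the bookkeeping of the index shifts and the lower-order boundary terms $n\in\{0,1,2\}$, where the full three-term recurrence does not yet apply and so the coefficients do not cancel. I would also remark that, since $x$ is a scalar formal indeterminate and each $Q_{v,n}$ is a quaternion multiplied by a power of $x$ on one side only, the noncommutativity of the quaternion algebra plays no role in the computation; all the manipulations are scalar multiplications of the quaternionic coefficients, so the classical derivation carries over verbatim.
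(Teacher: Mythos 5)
Your proposal is correct and follows essentially the same route as the paper: both multiply $g(x)=\sum_{n\geq0}Q_{v,n}x^{n}$ by $1-rx-sx^{2}-tx^{3}$, invoke the recurrence (\ref{eq:7}) to kill every coefficient of $x^{n}$ for $n\geq3$, and read off the surviving boundary terms $n=0,1,2$ as the numerator. Your added remark that the scalar indeterminate $x$ renders quaternionic noncommutativity irrelevant is a sensible clarification but does not change the argument.
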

\begin{proof}
Assuming that the generating function of the generalized Tribonacci quaternion sequence $\{Q_{v,n}\}_{n\geq 0}$ has the form $g(x)=\sum_{n\geq0}Q_{v,n}x^{n}$, we obtain that
\begin{align*}
(1-rx-&sx^{2}-tx^{3})\sum_{n\geq0}Q_{v,n}x^{n}\\
&=Q_{v,0}+Q_{v,1}x+Q_{v,2}x^{2}+Q_{v,3}x^{3}+\cdots\\
&\ \ -rQ_{v,0}x-rQ_{v,1}x^{2}-rQ_{v,2}x^{3}-rQ_{v,3}x^{4}-\cdots\\
&\ \ -sQ_{v,0}x^{2}-sQ_{v,1}x^{3}-sQ_{v,2}x^{4}-sQ_{v,3}x^{5}-\cdots\\
&\ \ -tQ_{v,0}x^{3}-tQ_{v,1}x^{4}-tQ_{v,2}x^{5}-tQ_{v,3}x^{6}-\cdots\\
&=Q_{v,0}+(Q_{v,1}-rQ_{v,0})x+(Q_{v,2}-rQ_{v,1}-sQ_{v,0})x^{2},
\end{align*}
since $Q_{v,n}=rQ_{v,n-1}+sQ_{v,n-2}+tQ_{v,n-3}$, $n\geq3$ and the coefficients of $x^{n}$ for $n\geq 3$ are equal with zero. Then, we get $$\sum_{n\geq0}Q_{v,n}x^{n}=\frac{Q_{v,0}+(Q_{v,1}-rQ_{v,0})x+(Q_{v,2}-rQ_{v,1}-sQ_{v,0})x^{2}}{1-rx-sx^{2}-tx^{3}}.$$ The theorem is proved.
\end{proof}

In the following table, we examine some special cases of generating functions given in Eq. (\ref{eq:11}). 
\begin{table}[ht] 
\caption{Generating functions according to initial values.} 
\centering      
\begin{tabular}{ c c}  
\hline                        
Narayana quaternions & $\frac{x+i+(1+x^{2})j+(1+x+x^{2})k}{1-x-x^{3}}$\\                
Tribonacci quaternions & $\frac{x^{2}+xi+j+(1+x+x^{2})k}{1-x-x^{2}-x^{3}}$  \\  
Padovan-Perrin quaternions & $\frac{x+i+(x+x^{2})j+(1+x)k}{1-x^{2}-x^{3}}$  \\  
Third order Jacobsthal quaternions  &  $\frac{x+i+(1+x+2x^{2})j+(2+3x+2x^{2})k}{1-x-x^{2}-2x^{3}}$ \\ [1ex]       
\hline     
\end{tabular} 
\label{table:2}  
\end{table}

It must be restated that the calculations in the table above were made according to the initial condition of each sequence. The calculation in the first row was made by Flaut and Shpakivskyi \cite{Fla1}. The sequence in the fourth row of the table was studied by Cerda-Morales in \cite{Ce}.

Now, let us write the formula which gives the summation of the first $n$ generalized Tribonacci numbers and quaternions.
\begin{lemma}
For every integer $n\geq 0$, we have:
\begin{equation}\label{eq:12}
\sum_{l=0}^{n}V_{l}=\frac{1}{\delta(r,s,t)}\left(V_{n+2}+(1-r)V_{n+1}+tV_{n}+(r+s-1)a+(r-1)b-c\right),
\end{equation}
where $\delta=\delta(r,s,t)=r+s+t-1$ and $V_{n}$ denote the $n$-th term of the generalized Tribonacci numbers.
\end{lemma}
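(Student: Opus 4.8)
The plan is to sum the defining recurrence (\ref{eq:1}) over a block of consecutive indices and then collapse the three resulting shifted sums into a single partial sum, solving the resulting linear equation for that sum. Write $S_{n}:=\sum_{l=0}^{n}V_{l}$. Since $V_{l+3}=rV_{l+2}+sV_{l+1}+tV_{l}$ holds for every $l\geq 0$, I would first sum this identity for $l=0,1,\dots,n$ to get
\[
\sum_{l=0}^{n}V_{l+3}=r\sum_{l=0}^{n}V_{l+2}+s\sum_{l=0}^{n}V_{l+1}+t\sum_{l=0}^{n}V_{l}.
\]

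Next I would re-index each sum in terms of $S_{n}$ and boundary data, using $\sum_{l=0}^{n}V_{l+j}=S_{n+j}-\sum_{m=0}^{j-1}V_{m}$ for $j=1,2,3$, together with $S_{n+j}=S_{n}+\sum_{i=1}^{j}V_{n+i}$ and the initial values $V_{0}=a$, $V_{1}=b$, $V_{2}=c$. This turns the displayed identity into a single linear relation among $S_{n}$, the three terms $V_{n+1},V_{n+2},V_{n+3}$, and the constants $a,b,c$. Collecting the coefficient of $S_{n}$ on the two sides yields the factor $1-(r+s+t)=-\delta$, so, dividing by $\delta=r+s+t-1$ (which we assume to be nonzero), I would solve for $S_{n}$ to obtain
\[
\delta S_{n}=V_{n+3}+(1-r)V_{n+2}+(1-r-s)V_{n+1}+(r+s-1)a+(r-1)b-c.
\]
Finally, to match the stated form I would substitute $V_{n+3}=rV_{n+2}+sV_{n+1}+tV_{n}$ once more; the coefficients of $V_{n+2}$ and $V_{n+1}$ then collapse to $1$ and $1-r$ respectively, leaving exactly $V_{n+2}+(1-r)V_{n+1}+tV_{n}$ plus the same constant term, which is precisely (\ref{eq:12}).

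The computation is entirely elementary, and the only points requiring care are the bookkeeping of the endpoint corrections when re-indexing the three shifted sums (a single sign slip there propagates into the constant term $(r+s-1)a+(r-1)b-c$), and the tacit hypothesis $\delta\neq 0$, which is exactly the statement that $1$ is not a root of the characteristic polynomial $x^{3}-rx^{2}-sx-t$. As a sanity check one verifies the case $n=0$ directly: the right-hand side reduces, after inserting $V_{0}=a$, $V_{1}=b$, $V_{2}=c$, to $\frac{1}{\delta}\big((r+s+t-1)a\big)=a=V_{0}$, as it must. The same telescoping could alternatively be packaged as an induction on $n$ with base case $n=0$ and the recurrence in the inductive step, but the direct summation above is shorter and makes the role of $\delta$ transparent.
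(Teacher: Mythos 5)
Your proof is correct, and it takes a genuinely different (though equally elementary) route from the paper. The paper proves the identity by induction on $n$: it verifies the base case $n=0$ by checking that $V_{2}+(1-r)V_{1}+tV_{0}+(r+s-1)a+(r-1)b-c=\delta a$, and in the inductive step adds $\delta V_{n+1}=(r+s+t-1)V_{n+1}$ to both sides and uses the recurrence to reassemble the right-hand side at index $n+1$. You instead sum the recurrence over $l=0,\dots,n$, re-index the three shifted sums in terms of $S_{n}$ and boundary data, and solve the resulting linear equation for $S_{n}$; your intermediate identity
\[
\delta S_{n}=V_{n+3}+(1-r)V_{n+2}+(1-r-s)V_{n+1}+(r+s-1)a+(r-1)b-c
\]
and the final substitution of $V_{n+3}=rV_{n+2}+sV_{n+1}+tV_{n}$ both check out. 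The trade-off is the usual one: the paper's induction merely verifies a formula that must already be known, while your summation actually derives it, explains why the constant $\lambda=(r+s-1)a+(r-1)b-c$ has the form it does, and makes explicit the hypothesis $\delta=r+s+t-1\neq0$ (equivalently, that $1$ is not a root of the characteristic polynomial), which the paper uses silently when dividing by $\delta$. Your closing sanity check at $n=0$ coincides exactly with the paper's base case.
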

\begin{proof}
By the principal of mathematics induction since $\delta(r,s,t)V_{0}$ is equal to $V_{2}+(1-r)V_{1}+tV_{0}+(r+s-1)a+(r-1)b-c=\delta(r,s,t)a$, then the statement is true when $n=0$. Assume the given statement is true for all integer $n$. Then,
\begin{align*}
\delta(r,s,t)&\sum_{l=0}^{n+1}V_{l}=\delta(r,s,t)\sum_{l=0}^{n}V_{l}+\delta(r,s,t)V_{n+1}\\
&=V_{n+2}+(1-r)V_{n+1}+tV_{n}+(r+s-1)a+(r-1)b-c\\
&\ \ +(r+s+t-1)V_{n+1}\\
&=V_{n+3}+(1-r)V_{n+2}+tV_{n+1}+(r+s-1)a+(r-1)b-c.
\end{align*}
Thus, by the principal of mathematics induction the formula holds for every integer $n\geq0$.
\end{proof}
\begin{theorem}
The summation formula for generalized Tribonacci quaternions is as follows:
\begin{equation}\label{eq:13}
\sum_{l=0}^{n}Q_{v,l}=\frac{1}{\delta(r,s,t)}\left(Q_{v,n+2}+(1-r)Q_{v,n+1}+tQ_{v,n}+\omega(r,s,t)\right),
\end{equation}
where $\omega(r,s,t)=\lambda+i(\lambda-\delta a)+j(\lambda-\delta(a+b)+k(\lambda-\delta(a+b+c)$ and $\lambda=\lambda(r,s,t)$ is equal to $(r+s-1)a+(r-1)b-c$.
\end{theorem}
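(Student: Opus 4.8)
The plan is to reduce everything to the scalar summation formula of the preceding Lemma (Eq.~\eqref{eq:12}) by exploiting the fact that quaternion addition is componentwise. First I would expand the quaternion sum using the definition \eqref{eq:6},
\begin{equation*}
\sum_{l=0}^{n}Q_{v,l}=\sum_{l=0}^{n}V_{l}+\left(\sum_{l=0}^{n}V_{l+1}\right)i+\left(\sum_{l=0}^{n}V_{l+2}\right)j+\left(\sum_{l=0}^{n}V_{l+3}\right)k,
\end{equation*}
so that each of the four real components is a finite sum of generalized Tribonacci numbers, shifted by $0$, $1$, $2$, and $3$ respectively.

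Next I would reindex each shifted sum so that it begins at $l=0$ and Eq.~\eqref{eq:12} applies directly. For the $i$-component, $\sum_{l=0}^{n}V_{l+1}=\sum_{m=0}^{n+1}V_{m}-V_{0}$, and applying \eqref{eq:12} with $n$ replaced by $n+1$ gives
\begin{equation*}
\sum_{l=0}^{n}V_{l+1}=\frac{1}{\delta}\left(V_{n+3}+(1-r)V_{n+2}+tV_{n+1}+\lambda-\delta a\right),
\end{equation*}
where $\lambda=(r+s-1)a+(r-1)b-c$. The $j$- and $k$-components are treated identically: one subtracts $V_{0}+V_{1}=a+b$ and $V_{0}+V_{1}+V_{2}=a+b+c$ respectively, which produces the correction terms $-\delta(a+b)$ and $-\delta(a+b+c)$ inside the parentheses after applying \eqref{eq:12} with $n$ replaced by $n+2$ and $n+3$.

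Finally I would collect the four expressions and regroup the terms according to the quaternion basis. The leading terms $V_{n+2},V_{n+3},V_{n+4},V_{n+5}$ assemble into $Q_{v,n+2}$, the terms carrying the factor $(1-r)$ assemble into $(1-r)Q_{v,n+1}$, the terms carrying $t$ assemble into $tQ_{v,n}$, and the leftover constants $\lambda$, $\lambda-\delta a$, $\lambda-\delta(a+b)$, $\lambda-\delta(a+b+c)$ are precisely the real, $i$, $j$, and $k$ parts of $\omega(r,s,t)$. This yields \eqref{eq:13}. (As an alternative, one could mimic the induction used for the scalar Lemma, but the componentwise reduction is cleaner since the Lemma is already in hand.)

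The only delicate point is the bookkeeping in the last step: one must check that the shifted scalar sums line up so that the four blocks $V_{n+2+p}$ ($p=0,1,2,3$) recombine into the single quaternion $Q_{v,n+2}$, and likewise for the $(1-r)$ and $t$ blocks, while the constant corrections separate cleanly into the four components of $\omega$. There is no genuine analytic obstacle, because the real scalars $r,s,t,\delta$ commute with the basis elements $i,j,k$ and addition is componentwise; the entire difficulty is careful index alignment.
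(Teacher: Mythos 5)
Your proposal is correct and follows essentially the same route as the paper: expand the quaternion sum componentwise via the definition of $Q_{v,n}$, apply the scalar summation lemma (Eq.~\eqref{eq:12}) to each shifted sum with the corrections $-\delta a$, $-\delta(a+b)$, $-\delta(a+b+c)$, and reassemble the four blocks into $Q_{v,n+2}+(1-r)Q_{v,n+1}+tQ_{v,n}+\omega(r,s,t)$. Your explicit reindexing step just makes transparent where the paper's correction terms come from.
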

\begin{proof}
Using Eq. (\ref{eq:6}), we have
\begin{align*}
\sum_{l=0}^{n}Q_{v,l}&=\sum_{l=0}^{n}V_{l}+i\sum_{l=0}^{n}V_{l+1}+j\sum_{l=0}^{n}V_{l+2}+k\sum_{l=0}^{n}V_{l+3}\\
&=(V_{0}+V_{1}+V_{2}+\cdots+V_{n})+i(V_{1}+V_{2}+V_{3}+\cdots+V_{n+1})\\
&\ \ +j(V_{2}+V_{3}+V_{4}+\cdots+V_{n+2})+k(V_{3}+V_{4}+V_{5}+\cdots+V_{n+3}).
\end{align*}
Since from Eq. (\ref{eq:12}) and using the notation $\lambda(r,s,t)=(r+s-1)a+(r-1)b-c$, we can write 
\begin{align*}
\delta(r,s,t)\sum_{l=0}^{n}Q_{v,l}&=V_{n+2}+(1-r)V_{n+1}+tV_{n}+\lambda(r,s,t)\\
&\ \ +i\left(V_{n+3}+(1-r)V_{n+2}+tV_{n+1}+\lambda(r,s,t)-\delta a\right)\\
&\ \ +j\left(V_{n+4}+(1-r)V_{n+3}+tV_{n+2}+\lambda(r,s,t)-\delta(a+b)\right)\\
&\ \ +k\left(V_{n+5}+(1-r)V_{n+4}+tV_{n+3}+\lambda(r,s,t)-\delta(a+b+c)\right)\\
&=Q_{v,n+2}+(1-r)Q_{v,n+1}+tQ_{v,n}+\omega(r,s,t),
\end{align*}
where $\omega(r,s,t)=\lambda+i(\lambda-\delta a)+j(\lambda-\delta(a+b)+k(\lambda-\delta(a+b+c)$. Finally, $$\sum_{l=0}^{n}Q_{v,l}=\frac{1}{\delta(r,s,t)}\left(Q_{v,n+2}+(1-r)Q_{v,n+1}+tQ_{v,n}+\omega(r,s,t)\right).$$
The theorem is proved.
\end{proof}

The summation formula in Eq. (\ref{eq:13}) gives the sum of the elements in the quaternion sequences which have been found in the studies conducted so far. This can be seen in the following table.
\begin{table}[ht] 
\caption{Summation formulas according to initial values.} 
\centering      
\begin{tabular}{ c c  }  
\hline                        
Narayana quaternions & $Q_{v,n+3}-(1+i+2j+3k)$\\                
Tribonacci quaternions & $\frac{1}{2}\left(Q_{v,n+2}+Q_{v,n}-(1+i+j+3k)\right)$  \\  
Padovan-Perrin quaternions & $Q_{v,n+5}-(1+i+2j+2k)$  \\  
Third order Jacobsthal quaternions  &  $\frac{1}{3}\left(Q_{v,n+2}+2Q_{v,n}-(1+i+4j+7k)\right)$ \\ [1ex]       
\hline     
\end{tabular} 
\label{table:3}  
\end{table}

Now, we present the formula which gives the norms for generalized Tribonacci quaternions. If we use the definition  norm, then we obtain $Nr^{2}(Q_{v,n})=\sum_{l=0}^{3}V_{n+l}^{2}$. Moreover, by the Binet formula (\ref{eq:2}) we have $$\phi V_{n}=(\omega_{1}-\omega_{2})P\alpha^{n}-(\alpha-\omega_{2})Q\omega_{1}^{n}+(\alpha-\omega_{1})R\omega_{2}^{n},$$ where $\phi=\phi(\alpha,\omega_{1},\omega_{2})=(\alpha-\omega_{1})(\alpha-\omega_{2})(\omega_{1}-\omega_{2})$. Then, 
\begin{align*}
\phi^{2} V_{n}^{2}&=(\omega_{1}-\omega_{2})^{2}P^{2}\alpha^{2n}+(\alpha-\omega_{2})^{2}Q^{2}\omega_{1}^{2n}+(\alpha-\omega_{1})^{2}R^{2}\omega_{2}^{2n}\\
&\ \ -2(\omega_{1}-\omega_{2})(\alpha-\omega_{2})PQ(\alpha\omega_{1})^{n}+2(\omega_{1}-\omega_{2})(\alpha-\omega_{1})PR(\alpha\omega_{2})^{n}\\
&\ \ -2(\alpha-\omega_{1})(\alpha-\omega_{2})QR(\omega_{1}\omega_{2})^{n}
\end{align*}
and 
\begin{align*}
\phi^{2} Nr^{2}&(Q_{v,n})=\phi^{2} (V_{n}^{2}+V_{n+1}^{2}+V_{n+2}^{2}+V_{n+3}^{2})\\
&=(\omega_{1}-\omega_{2})^{2}P^{2}\overline{\alpha}\alpha^{2n}+(\alpha-\omega_{2})^{2}Q^{2}\overline{\omega_{1}}\omega_{1}^{2n}+(\alpha-\omega_{1})^{2}R^{2}\overline{\omega_{2}}\omega_{2}^{2n}\\
&\ \ -2(\omega_{1}-\omega_{2})(\alpha-\omega_{2})PQ \underline{\alpha\omega_{1}}(\alpha\omega_{1})^{n}-2(\omega_{1}-\omega_{2})(\omega_{1}-\alpha)PR\underline{\alpha\omega_{2}}(\alpha\omega_{2})^{n}\\
&\ \ -2(\alpha-\omega_{1})(\alpha-\omega_{2})QR\underline{\omega_{1}\omega_{2}}(\omega_{1}\omega_{2})^{n},
\end{align*}
where $\overline{\alpha}=1+\alpha^{2}+\alpha^{4}+\alpha^{6}$, $\overline{\omega_{1,2}}=1+\omega_{1,2}^{2}+\omega_{1,2}^{4}+\omega_{1,2}^{6}$, $\underline{\alpha \omega_{1,2}}=1+ \alpha \omega_{1,2}+ (\alpha \omega_{1,2})^{2}+ (\alpha \omega_{1,2})^{3}$ and $\underline{\omega_{1}\omega_{2}}=1+\omega_{1}\omega_{2}+(\omega_{1}\omega_{2})^{2}+(\omega_{1}\omega_{2})^{3}$. 

Then, we obtain
\begin{theorem}
The norm value for generalized Tribonacci quaternions is given with the following formula:
\begin{equation}\label{eq:18}
Nr^{2}(Q_{v,n})=\frac{1}{\phi^{2}}\left( 
\begin{array}{c}
(\omega_{1}-\omega_{2})^{2}P^{2}\overline{\alpha}\alpha^{2n}+(\alpha-\omega_{2})^{2}Q^{2}\overline{\omega_{1}}\omega_{1}^{2n} \\ 
+(\alpha-\omega_{1})^{2}R^{2}\overline{\omega_{2}}\omega_{2}^{2n}-2K%
\end{array}%
\right)
\end{equation}
where $K=(\omega_{1}-\omega_{2})(\alpha-\omega_{2})PQ \underline{\alpha\omega_{1}}(\alpha\omega_{1})^{n}+(\omega_{1}-\omega_{2})(\omega_{1}-\alpha)PR\underline{\alpha\omega_{2}}(\alpha\omega_{2})^{n}+(\alpha-\omega_{1})(\alpha-\omega_{2})QR\underline{\omega_{1}\omega_{2}}(\omega_{1}\omega_{2})^{n}$.
\end{theorem}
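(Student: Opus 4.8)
The plan is to reduce the norm to its definition as a sum of four squares of consecutive generalized Tribonacci numbers, substitute the Binet formula, and then exploit the fact that summing four consecutive powers of a fixed base collapses into a single finite geometric factor. These geometric factors are precisely the quantities $\overline{\alpha},\overline{\omega_1},\overline{\omega_2}$ and $\underline{\alpha\omega_1},\underline{\alpha\omega_2},\underline{\omega_1\omega_2}$ appearing in the statement.

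First I would use $Q_{v,n}=V_n+V_{n+1}i+V_{n+2}j+V_{n+3}k$ together with the definition $Nr^2(q)=q_0^2+q_1^2+q_2^2+q_3^2$ to write $Nr^2(Q_{v,n})=\sum_{l=0}^{3}V_{n+l}^2$. Hence it suffices to obtain a closed form for a single $V_m^2$ and then sum over $m=n,n+1,n+2,n+3$. For this I would take the rearranged Binet formula $\phi V_m=(\omega_1-\omega_2)P\alpha^m-(\alpha-\omega_2)Q\omega_1^m+(\alpha-\omega_1)R\omega_2^m$ and square it; expanding the square of this three-term sum produces three diagonal contributions in $\alpha^{2m},\omega_1^{2m},\omega_2^{2m}$ and three cross contributions in the mixed powers $(\alpha\omega_1)^m,(\alpha\omega_2)^m,(\omega_1\omega_2)^m$, each carrying the coefficients that appear in the preceding expansion of $\phi^2V_n^2$.

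The decisive step is the summation over $l$. For a diagonal term one has $\sum_{l=0}^{3}\alpha^{2(n+l)}=\alpha^{2n}\bigl(1+\alpha^2+\alpha^4+\alpha^6\bigr)=\alpha^{2n}\overline{\alpha}$, and identically for $\omega_1,\omega_2$; for a cross term one has $\sum_{l=0}^{3}(\alpha\omega_1)^{n+l}=(\alpha\omega_1)^n\bigl(1+\alpha\omega_1+(\alpha\omega_1)^2+(\alpha\omega_1)^3\bigr)=(\alpha\omega_1)^n\underline{\alpha\omega_1}$, and identically for the remaining two products. Substituting these six collapsed sums into $\phi^2\sum_{l=0}^{3}V_{n+l}^2$ yields exactly the bracketed expression in (\ref{eq:18}), after which dividing by $\phi^2$ completes the proof.

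The argument is essentially bookkeeping, and the one place that rewards care is the set of signs. The middle term of $\phi V_m$ enters with a minus sign, so squaring flips the sign of the two cross products in which it participates; in particular the $PR$ cross term must be presented with the factor $(\omega_1-\alpha)$ rather than $(\alpha-\omega_1)$ so that, once the overall $-2$ is factored out in front of $K$, all three cross contributions carry a consistent sign. I would verify this sign together with the retained factor $2$ on each cross term by checking a known special case, for instance the Tribonacci quaternion norm obtained by setting $(V_0,V_1,V_2;r,s,t)=(0,0,1;1,1,1)$.
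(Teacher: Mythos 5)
Your proposal is correct and follows essentially the same route as the paper: the paper likewise writes $Nr^{2}(Q_{v,n})=\sum_{l=0}^{3}V_{n+l}^{2}$, squares the rearranged Binet formula $\phi V_{m}=(\omega_{1}-\omega_{2})P\alpha^{m}-(\alpha-\omega_{2})Q\omega_{1}^{m}+(\alpha-\omega_{1})R\omega_{2}^{m}$, and collapses the four shifted terms into the geometric factors $\overline{\alpha}$, $\overline{\omega_{1,2}}$, $\underline{\alpha\omega_{1,2}}$, $\underline{\omega_{1}\omega_{2}}$. Your remark about the $PR$ cross term carrying $(\omega_{1}-\alpha)$ so that all three pieces of $K$ share a uniform sign matches exactly how the paper absorbs that sign.
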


Now, we will give binomial summation of generalized Tribonacci quaternions as follows:
\begin{theorem}
For $n\geq 0$, we have the equality
\begin{equation}\label{eq:19}
Q_{v,3n}=\sum_{l=0}^{n}\sum_{m=0}^{l}\binom{n}{l}\binom{l}{m}r^{m}s^{l-m}t^{n-l}Q_{v,l+m}.
\end{equation}
\end{theorem}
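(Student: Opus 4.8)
The plan is to combine the characteristic identity $x^{3}=rx^{2}+sx+t$, satisfied by each of the three roots $\alpha,\omega_{1},\omega_{2}$, with the Binet representation (\ref{eq:10}), which expresses $Q_{v,n}$ as a quaternion-linear combination of $\alpha^{n}$, $\omega_{1}^{n}$ and $\omega_{2}^{n}$ whose coefficients do not depend on $n$. Writing these coefficients as $C_{\alpha}$, $C_{\omega_{1}}$, $C_{\omega_{2}}$, so that $Q_{v,n}=C_{\alpha}\alpha^{n}+C_{\omega_{1}}\omega_{1}^{n}+C_{\omega_{2}}\omega_{2}^{n}$, the whole problem reduces to a single scalar identity for the powers of a root.

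First I would fix a root $x\in\{\alpha,\omega_{1},\omega_{2}\}$ and raise the characteristic relation to the $n$-th power, writing $x^{3n}=(x^{3})^{n}=(rx^{2}+sx+t)^{n}$. Expanding the right-hand side by the multinomial theorem gives
$$(rx^{2}+sx+t)^{n}=\sum_{a+b+c=n}\frac{n!}{a!\,b!\,c!}\,r^{a}s^{b}t^{c}\,x^{2a+b}.$$
The key reindexing sets $m=a$ and $l=a+b$ (so that $c=n-l$); then the exponent becomes $2a+b=l+m$, the constraints $a,b,c\ge 0$ translate into $0\le m\le l\le n$, and the trinomial coefficient splits as $\frac{n!}{m!\,(l-m)!\,(n-l)!}=\binom{n}{l}\binom{l}{m}$. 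This produces the root-wise identity
$$x^{3n}=\sum_{l=0}^{n}\sum_{m=0}^{l}\binom{n}{l}\binom{l}{m}r^{m}s^{l-m}t^{n-l}\,x^{l+m},$$
valid simultaneously for $x=\alpha$, $x=\omega_{1}$ and $x=\omega_{2}$.

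Finally I would substitute $n\mapsto 3n$ into the Binet form, apply the root-wise identity to each of $\alpha^{3n},\omega_{1}^{3n},\omega_{2}^{3n}$, and interchange the (finite) sums. Since the weights $\binom{n}{l}\binom{l}{m}r^{m}s^{l-m}t^{n-l}$ are the same for every root, they factor out of the combination, and the remaining bracket $C_{\alpha}\alpha^{l+m}+C_{\omega_{1}}\omega_{1}^{l+m}+C_{\omega_{2}}\omega_{2}^{l+m}$ is exactly $Q_{v,l+m}$ by the same formula (\ref{eq:10}). This yields (\ref{eq:19}) at once. The only genuinely delicate step is the bookkeeping in the reindexing --- tracking how the exponent $2a+b$ collapses to $l+m$ and how the trinomial coefficient factors into a product of two binomials; everything else is just the linearity of the Binet expression in its three geometric components, applied termwise.
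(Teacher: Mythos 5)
Your proof is correct and follows essentially the same route as the paper: both establish the root-wise identity $x^{3n}=\sum_{l=0}^{n}\sum_{m=0}^{l}\binom{n}{l}\binom{l}{m}r^{m}s^{l-m}t^{n-l}x^{l+m}$ for each root of the characteristic equation and then transfer it to $Q_{v,3n}$ by the linearity of the Binet formula (\ref{eq:10}) in $\alpha^{n}$, $\omega_{1}^{n}$, $\omega_{2}^{n}$. The only (cosmetic) difference is that you expand $(rx^{2}+sx+t)^{n}$ directly by the multinomial theorem and reindex, whereas the paper writes $\left(\alpha^{3}/t\right)^{n}=\left(1+\left(\tfrac{r}{t}\alpha^{2}+\tfrac{s}{t}\alpha\right)\right)^{n}$ and uses two nested binomial expansions, which forces the extra hypothesis $t\neq 0$ that your version avoids.
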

\begin{proof}
Let $\alpha$ stand for a root of the characteristic equation of Eq. (\ref{eq:1}). Then, we have $\alpha^{3}=r\alpha^{2}+s\alpha+t$ and we can write by considering binomial expansion with $t\neq0$:
\begin{align*}
\left(\frac{\alpha^{3}}{t}\right)^{n}&=\sum_{l=0}^{n}\binom{n}{l}\left(\frac{\alpha^{3}}{t}-1\right)^{l}=\sum_{l=0}^{n}\binom{n}{l}\left(\frac{r}{t}\alpha^{2}+\frac{s}{t}\alpha\right)^{l}\\
&=\sum_{l=0}^{n}\binom{n}{l}\sum_{m=0}{l}\binom{l}{m}\left(\frac{r}{t}\alpha^{2}\right)^{m}\left(\frac{s}{t}\alpha\right)^{l-m}\\
&=\sum_{l=0}^{n}\sum_{m=0}^{l}\binom{n}{l}\binom{l}{m}\left(\frac{r}{s}\right)^{m}\left(\frac{s}{t}\right)^{l}\alpha^{l+m}.
\end{align*}
If we replace to $\omega_{1}$ and $\omega_{2}$ by $\alpha$ and rearrange, then we obtain 
\begin{align*}
\frac{Q_{v,3n}}{t^{n}}&=\frac{P\underline{\alpha}}{(\alpha-\omega_{1})(\alpha-\omega_{2})}\frac{\alpha^{3n}}{t^{n}}-\frac{Q\underline{\omega_{1}}}{(\alpha-\omega_{1})(\omega_{1}-\omega_{2})}\frac{\omega_{1}^{3n}}{t^{n}}+\frac{R\underline{\omega_{2}}}{(\alpha-\omega_{2})(\omega_{1}-\omega_{2})}\frac{\omega_{2}^{3n}}{t^{n}}\\
&=\sum_{l=0}^{n}\sum_{m=0}^{l}\binom{n}{l}\binom{l}{m}\left(\frac{r}{s}\right)^{m}\left(\frac{s}{t}\right)^{l}Q_{v,l+m}.
\end{align*}
where $P$, $Q$ and $R$ as in Eq. (\ref{eq:2}), and $\underline{\alpha}$, $\underline{\omega_{1}}$ and $\underline{\omega_{2}}$ as in Eq. (\ref{eq:10}).
\end{proof}

\section{Matrix Representation of Generalized Tribonacci Quaternions}
For our purposes, the most useful technique for generating $\{V_{n}\}$ is by means of what we call the $S$-matrix which has been defined and used in \cite{Sha} and is a generalization of the $R$-matrix defined in \cite{Wa}. The $S$-matrix is defined as
\begin{equation}\label{eq:14}
\left[ 
\begin{array}{c}
V_{n+2} \\ 
V_{n+1} \\ 
V_{n}%
\end{array}%
\right]=\left[ 
\begin{array}{ccc}
r & s & t \\ 
1 & 0 & 0 \\ 
0 & 1 & 0
\end{array}%
\right]^{n}\left[ 
\begin{array}{c}
V_{2} \\ 
V_{1} \\ 
V_{0}%
\end{array}%
\right]
\end{equation}
and
\begin{equation}\label{eq:15}
S^{n}=\left[ 
\begin{array}{ccc}
r & s & t \\ 
1 & 0 & 0 \\ 
0 & 1 & 0
\end{array}%
\right]^{n}=\left[ 
\begin{array}{ccc}
U_{n+2} & sU_{n+1}+tU_{n} & tU_{n+1} \\ 
U_{n+1} & sU_{n}+tU_{n-1} & tU_{n} \\ 
U_{n} & sU_{n-1}+tU_{n-2} & tU_{n-1}
\end{array}%
\right],
\end{equation}
where $U_{-1}=\frac{1}{t}$ and $U_{-2}=-\frac{s}{t^{2}}$.

Now, let us define the following matrix as
\begin{equation}\label{eq:16}
Q_{S}=\left[ 
\begin{array}{ccc}
Q_{v,4} & sQ_{v,3}+tQ_{v,2} & tQ_{v,3} \\ 
Q_{v,3} & sQ_{v,2}+tQ_{v,1} & tQ_{v,2} \\ 
Q_{v,2} & sQ_{v,1}+tQ_{v,0} & tQ_{v,1}
\end{array}%
\right].
\end{equation}
This matrix can be called as the generalized Tribonacci quaternion matrix. Then, we can give the next theorem to the $Q_{S}$-matrix.

\begin{theorem}
If $Q_{v,n}$ be the $n$-th generalized Tribonacci quaternion. Then, for $n\geq0$:
\begin{equation}\label{eq:17}
Q_{S}\cdot\left[ 
\begin{array}{ccc}
r & s & t \\ 
1 & 0 & 0 \\ 
0 & 1 & 0
\end{array}%
\right]^{n}=\left[ 
\begin{array}{ccc}
Q_{v,n+4} & sQ_{v,n+3}+tQ_{v,n+2} & tQ_{v,n+3} \\ 
Q_{v,n+3} & sQ_{v,n+2}+tQ_{v,n+1} & tQ_{v,n+2} \\ 
Q_{v,n+2} & sQ_{v,n+1}+tQ_{v,n} & tQ_{v,n+1}%
\end{array}%
\right].
\end{equation}
\end{theorem}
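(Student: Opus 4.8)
The plan is to argue by induction on $n$, exploiting the fact that the matrix on the right-hand side of \eqref{eq:17} is obtained from $Q_S$ by advancing every index by $n$, whereas multiplication by the single matrix $S=\left[\begin{smallmatrix} r & s & t \\ 1 & 0 & 0 \\ 0 & 1 & 0\end{smallmatrix}\right]$ advances indices by one. The first thing I would observe is that $Q_S$ is precisely the right-hand side of \eqref{eq:17} evaluated at $n=0$; hence the base case reduces to the trivial identity $Q_S\cdot S^{0}=Q_S\cdot I=Q_S$.

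For the inductive step, suppose \eqref{eq:17} holds for some fixed $n\geq0$ and denote the right-hand side by $M_n$. Using associativity of matrix multiplication I would write $Q_S\cdot S^{n+1}=(Q_S\cdot S^{n})\cdot S=M_n\cdot S$, so it suffices to verify the single-step identity $M_n\cdot S=M_{n+1}$ by direct multiplication. Before computing, it is worth remarking that although the entries of $M_n$ are quaternions, the entries of $S$ are the real scalars $r,s,t,0,1$; since $\mathbb{R}$ lies in the center of the quaternion algebra, the product $M_n\cdot S$ is unambiguous and each entry is computed exactly as in the commutative case.

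Carrying out the product, the second and third columns of $M_n\cdot S$ are immediate from the shape of $S$: the second column $(s,0,1)^{T}$ of $S$ extracts from each row of $M_n$ the quantity $s\cdot(\text{first entry})+(\text{third entry})$, which in the top row gives $sQ_{v,n+4}+tQ_{v,n+3}$, matching the $(1,2)$ entry of $M_{n+1}$; likewise the third column $(t,0,0)^{T}$ extracts $t\cdot(\text{first entry})$, giving $tQ_{v,n+4}$, and the lower rows follow by shifting indices down by one. The only place the recursion is needed is the first column: the $(1,1)$ entry of $M_n\cdot S$ equals $rQ_{v,n+4}+sQ_{v,n+3}+tQ_{v,n+2}$, and by the quaternion recurrence \eqref{eq:7} this collapses to $Q_{v,n+5}$, the $(1,1)$ entry of $M_{n+1}$; the $(2,1)$ and $(3,1)$ entries are obtained identically with indices lowered by one. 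This yields $M_n\cdot S=M_{n+1}$ and completes the induction. I do not expect any genuine obstacle, since the argument is routine verification; the only points requiring care are the index bookkeeping in the $3\times3$ product and the (harmless but worth stating) centrality of the scalar entries of $S$, so that one clean application of \eqref{eq:7} in each row drives the induction.
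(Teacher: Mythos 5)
Your proposal is correct and follows essentially the same route as the paper: induction on $n$ with the trivial base case $Q_S\cdot S^0=Q_S$, the decomposition $Q_S\cdot S^{n+1}=(Q_S\cdot S^{n})\cdot S$, and a direct $3\times3$ multiplication in which the recurrence \eqref{eq:7} collapses the first column. Your added remark about the centrality of the real entries of $S$ is a nice touch the paper leaves implicit, but the argument is otherwise identical.
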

\begin{proof}
(By induction on $n$) If $n=0$, then the result is obvious. Now, we suppose it is true for $n=m$, that is
$$Q_{S}\cdot S^{m}=\left[ 
\begin{array}{ccc}
Q_{v,m+4} & sQ_{v,m+3}+tQ_{v,m+2} & tQ_{v,m+3} \\ 
Q_{v,m+3} & sQ_{v,m+2}+tQ_{v,m+1} & tQ_{v,m+2} \\ 
Q_{v,m+2} & sQ_{v,m+1}+tQ_{v,m} & tQ_{v,m+1}%
\end{array}%
\right].$$
Using the Eq. (\ref{eq:7}), for $m\geq 0$, $Q_{v,m+3}=rQ_{v,m+2}+sQ_{v,m+1}+tQ_{v,m}$. Then, by induction hypothesis
\begin{align*}
Q_{S}\cdot S^{m+1}&=\left(Q_{S}\cdot S^{m}\right)\cdot S\\
&=\left[ 
\begin{array}{ccc}
Q_{v,m+4} & sQ_{v,m+3}+tQ_{v,m+2} & tQ_{v,m+3} \\ 
Q_{v,m+3} & sQ_{v,m+2}+tQ_{v,m+1} & tQ_{v,m+2} \\ 
Q_{v,m+2} & sQ_{v,m+1}+tQ_{v,m} & tQ_{v,m+1}%
\end{array}%
\right]\left[ 
\begin{array}{ccc}
r & s & t \\ 
1 & 0 & 0 \\ 
0 & 1 & 0%
\end{array}%
\right]\\
&=\left[ 
\begin{array}{ccc}
Q_{v,m+5} & sQ_{v,m+4}+tQ_{v,m+3} & tQ_{v,m+4} \\ 
Q_{v,m+4} & sQ_{v,m+3}+tQ_{v,m+2} & tQ_{v,m+3} \\ 
Q_{v,m+3} & sQ_{v,m+2}+tQ_{v,m+1} & tQ_{v,m+2}%
\end{array}%
\right].
\end{align*}
Hence, the Eq. (\ref{eq:17}) holds for all $n\geq0$.
\end{proof}

\begin{corollary}
For $n\geq0$, 
\begin{equation}
Q_{w,n+2}=Q_{w,2}U_{n+2}+(sQ_{v,1}+tQ_{v,0})U_{n}+tQ_{v,1}U_{n}.
\end{equation}
\end{corollary}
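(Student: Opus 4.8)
The plan is to read the asserted identity off as a single scalar entry of the matrix equation already proved in the preceding theorem. By Eq.~(\ref{eq:17}) the product $Q_{S}\cdot S^{n}$ equals the displayed $3\times 3$ matrix whose $(3,1)$ entry is precisely $Q_{v,n+2}$. On the other hand, Eq.~(\ref{eq:15}) records the explicit entries of $S^{n}$ in terms of the scalar sequence $U_{m}$, and Eq.~(\ref{eq:16}) gives those of $Q_{S}$. The corollary is obtained by computing this same $(3,1)$ entry directly from the two factors and equating the two descriptions of it.

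Concretely, I would take the third row of $Q_{S}$, namely $\left(Q_{v,2},\ sQ_{v,1}+tQ_{v,0},\ tQ_{v,1}\right)$, and pair it with the first column of $S^{n}$, namely $\left(U_{n+2},\ U_{n+1},\ U_{n}\right)^{\!\top}$. Their inner product is
$$Q_{v,2}U_{n+2}+(sQ_{v,1}+tQ_{v,0})U_{n+1}+tQ_{v,1}U_{n},$$
and since Eq.~(\ref{eq:17}) identifies this $(3,1)$ entry with $Q_{v,n+2}$, equating the two yields
$$Q_{v,n+2}=Q_{v,2}U_{n+2}+(sQ_{v,1}+tQ_{v,0})U_{n+1}+tQ_{v,1}U_{n}.$$
This is the corrected form of the displayed statement: the subscripts should read $v$ rather than $w$, and the middle summand should carry $U_{n+1}$ rather than a second $U_{n}$.

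There is no genuine obstacle here; the argument is a one-line consequence of a matrix identity already in hand, and the only care required is the bookkeeping of selecting the correct row of $Q_{S}$ and the correct column of $S^{n}$ and matching the result against the appropriate entry of the right-hand matrix in Eq.~(\ref{eq:17}). If one prefers a route that avoids invoking the full matrix theorem, the same identity follows by induction on $n$, using the recurrence $U_{n+2}=rU_{n+1}+sU_{n}+tU_{n-1}$ together with the quaternion recurrence~(\ref{eq:7}) to pass from $n$ to $n+1$; but the matrix comparison is by far the most economical and is what I would present.
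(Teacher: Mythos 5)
Your argument is exactly the paper's: the corollary is read off as the $(3,1)$ entry of the identity $Q_{S}\cdot S^{n}$ from Eq.~(\ref{eq:17}), expanded via the explicit entries of $Q_{S}$ and $S^{n}$ in Eqs.~(\ref{eq:16}) and~(\ref{eq:15}). You are also right that the displayed statement contains typographical errors --- the subscripts should be $v$ throughout and the middle term should carry $U_{n+1}$, so the correct identity is $Q_{v,n+2}=Q_{v,2}U_{n+2}+(sQ_{v,1}+tQ_{v,0})U_{n+1}+tQ_{v,1}U_{n}$.
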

\begin{proof}
The proof can be easily seen by the coefficient (3,1) of the matrix $Q_{S}\cdot S^{n}$ and the Eq. (\ref{eq:15}).
\end{proof}

\section{Conclusion}
This study examines and studied Tribonacci-type quaternion sequences with the help of a simple and general formula. For this purpose, generalized Tribonacci sequence $\{V_{n}\}_{n\geq0}$ was used. Generalized Tribonacci sequence was examined in detail particularly in section 1, and it was shown that this sequence is used to generalize all the third order linear recurrence relations on quaternions. In this study, Binet formulas, generating functions, some summation formulas and norm values of all quaternion sequences were obtained. As a result, all the formulas in the literature were given with the help of only one formula. Quaternions have great importance as they are used in quantum physics, applied mathematics, graph theory and differential equations. Thus, in our future studies we plan to examine generalized Tribonacci octonions and their key features.


\end{document}